\documentclass[a4paper,10pt]{article}
\usepackage{amsmath}
\usepackage{amsthm}
\usepackage{mathrsfs}
\usepackage{latexsym}
\usepackage{amssymb}
\usepackage{amscd}
\usepackage[dvips]{graphics}
\usepackage[all,cmtip]{xy}
\usepackage{enumerate}
\usepackage[colorlinks=true]{hyperref}
\usepackage{tikz-cd}
\usepackage{fullpage}
\usepackage{enumitem}
\usepackage{mathabx}
\usepackage{soul}
\usepackage{xcolor}

\usepackage{palatino}
\usepackage{authblk}

\textheight 1in

\hypersetup{colorlinks=true,linkcolor=[rgb]{0.5, 0.0, 0},citecolor=[rgb]{0.5, 0.0, 0.5}, filecolor=magenta,urlcolor=blue}

\linespread{1.1}

\numberwithin{equation}{section}
\newtheorem{theorem}[equation]{Theorem}
\newtheorem{proposition}[equation]{Proposition}
\newtheorem{lemma}[equation]{Lemma}

\newtheorem{corollary}[equation]{Corollary}

\theoremstyle{remark}
\newtheorem{rmk}[equation]{Remark}
\theoremstyle{definition}

\newtheorem{example}[equation]{Example}

\newcommand{\gd}{\operatorname{gldim}}
\newcommand{\pd}{\operatorname{pd}}
\newcommand{\spec}{\operatorname{Spec}}

\newcommand{\gr}{\operatorname{gr}}
\newcommand{\Max}{\operatorname{Max}}
\newcommand{\fd}{\operatorname{fd}}
\newcommand{\Tor}{\operatorname{Tor}}
\newcommand{\Ext}{\operatorname{Ext}}
\newcommand{\cdim}{\operatorname{cdim}}

\newcommand{\HH}{\operatorname{HH}}
\newcommand{\rk}{\operatorname{rank}}

\setlength{\textwidth}{15.5cm}
\setlength{\textheight}{21.5cm}
\setlength{\evensidemargin}{0cm}
\setlength{\oddsidemargin}{0cm}
 \setlength{\topmargin}{0cm}

\title{A relative homology criterion of smoothness}

\author{Kostiantyn Iusenko, Eduardo do Nascimento Marcos, Victor do Valle Pretti}
\affil{Instituto de Matem\'{a}tica e Estat\'{i}stica, Univ. de S\~{a}o Paulo, S\~{a}o Paulo, SP, Brazil}

\date{}

\begin{document}

\maketitle

\begin{abstract}
We investigate the relationship between smoothness and the relative global dimension of a ring extension. We prove that a smooth commutative algebra $A$ over $B$ has finite relative global dimension $\gd(A,B)$. Conversely, under a mild condition on $B$, the finiteness of $\gd(A,B)$ implies that the map $B \to A$ is smooth. We also relate the relative global dimension to the usual global dimension of the fibers of $B \to A$, and establish a formula for the relative global dimension of tensor products of extensions. Finally, we present examples and an alternative characterization of smoothness in terms of relative Hochschild homology.

\medskip

\noindent\textbf{Keywords:} relative homology, smooth morphisms, commutative rings, global dimension.
\end{abstract}

\section{Introduction}

Smoothness is a fundamental concept in algebraic geometry, providing a key link between geometric and algebraic properties of varieties. A fundamental result due to Auslander-Buchsbaum and Serre (see \cite{AB, Serre}) states that if $V$ is an affine algebraic variety over a perfect field $k$ with coordinate ring $A$, then the global dimension of $A$ is finite if and only if $V$ is smooth. Instead of a map $k\to A$, one can consider a more general case in which $k$ is replaced by a commutative ring $B$. A well-established criterion of smoothness in this case (see \cite{Lod}) has a number of homological characterizations (see, for instance, \cite{Rodi,Av-Iyen} and references therein).

In this manuscript, we provide a characterization of smoothness via relative global homology developed by Hochschild in \cite{Hoc}, specifically focusing on the relative global dimension $\gd(A,B)$. Surprisingly, there is little literature on relative homological algebra for associative algebras. In contrast, a vast body of work exists for relative homological algebra in the context of  representations of  finite groups — results that have had a profound impact on modular representation theory and block theory of finite groups (see, for instance, \cite{Lin}). However, there has been a recent surge in interest in the application of relative homological algebra to attack certain homological conjectures for finite-dimensional algebras, such as the finitistic dimension conjecture and Han's conjecture. Interested readers can explore these recent developments and related work in \cite{XiXu, CLMS21, IM21} and references therein.

The manuscript is structured as follows. In Section~\ref{SecPrem}, we review basic notions in relative homological algebra, together with an overview of graded and filtered algebras and modules. Section~\ref{SecSmRel} is devoted to showing that if $A$ is a smooth commutative $B$-algebra, then its relative global dimension $\gd(A,B)$ is finite. In Section~\ref{SecCohom}, we study the converse direction. We establish a sufficient condition on $B$ under which the finiteness of $\gd(A,B)$ implies that the morphism $B \to A$ is smooth. Combining both results, we obtain our main characterization (Corollary~\ref{CorCriteriaPerfect}):  
if $k$ is a perfect field, $B$ a finitely generated $k$-algebra, and $A$ a flat $B$-algebra,  essentially of finite type, then  
\[
    B \to A \text{ is smooth } \quad \Longleftrightarrow \quad \gd(A,B) < \infty.
\]
This characterization also leads to further consequences. In particular, we deduce Corollary~\ref{cor-Fibers-2}, which—under the assumptions of Corollary~\ref{CorCriteriaPerfect}—relates the relative global dimension $\gd(A,B)$ to the global dimensions of the fibers of $B \to A$:
\[
    \gd(A,B)
      = \sup_{\mathfrak{q} \in \spec B} \gd(A \otimes_B k(\mathfrak{q}))
      = \sup_{\mathfrak{m} \in \Max B} \gd(A \otimes_B k(\mathfrak{m})).
\]
Using this result, we prove Proposition~\ref{prop-Gldim-tensor}, which provides a relative analogue of~\cite[Theorem~16]{Aus55} (for the case $B=k=D$):  
for two extensions $B \subseteq A$ and $D \subseteq C$ satisfying the assumptions of Corollary~\ref{CorCriteriaPerfect}, one has
\[
    \gd(A \otimes_k C, B \otimes_k D)
      = \gd(A, B) + \gd(C, D).
\]
We believe that the last two results are of independent interest. Finally, Section~\ref{SecExamp} concludes the paper with examples and an additional characterization of smoothness in terms of relative Hochschild homology, in the spirit of the Semi-Rigidity Theorem of~\cite{Av-Iyen} and \cite{Av-VP,BACH} (for the case $B=k$).

\textbf{Acknowledgements.} The first-named author thanks Changchang Xi for stimulating discussions on relative homological dimension during the early stages of this work. We also thank Daniel Levcovitz for helpful discussions, and Andrea Solotar and John MacQuarrie for their valuable feedback on a draft of this manuscript. K.I. and E.M. were partially supported by FAPESP grant 2018/23690-6 and by CNPq Universal Grant 405540/2023-0, K.I. was also partially supported by FAPEMIG Project APQ-03491-25, V.P. by FAPESP Scholarship 2022/12883-3, and E.M. by CNPq Research Grant 310651/2022-0. Finally, we thank the anonymous referee for numerous valuable comments that helped improve this manuscript.

\section{Preliminaries} \label{SecPrem}

In this section, we introduce some notation and review basic definitions and results needed for proving our main theorems. We work within the category of associative commutative rings with identity. Given a ring $A$, we denote by $A\mathrm{-Mod}$ the category of $A$-modules. Throughout the text, for a ring map $B\to A$, we denote by $A^e$ the ring $A\otimes_B A$ and treat $A$ as an $A^e$-module via the canonical surjective map $\mu:A\otimes_B A\to A$, $a\otimes a'\mapsto aa'$. 
For any ring $A$, we denote by $\spec A$ the topological space of prime ideals equipped with the Zariski topology, and by $\Max A$ its set of closed points, i.e., the maximal ideals of $A$.

\subsection{Relative (co)homology}

In order to describe relative homological algebra (cf. \cite{Hoc}), we first recall the notion of relatively projective modules. Given a homomorphism between rings $B \to A$, an $A$-module $M$ is called \textit{relatively $B$-projective}, or $(A,B)$-\textit{projective}, if it satisfies one, and therefore all, of the following equivalent conditions:
\begin{itemize}
    \item[(1)] the multiplication map $\mu_M : A \otimes_B M \to  M$ is a split epimorphism of $A$-modules, where $A\otimes_B M$ carries the natural left $A$-module structure;
    \item[(2)] $M$ is isomorphic, as an $A$-module, to a direct summand of the induced module $A\otimes_B V$, for $V$ some $B$-module;
    \item[(3)]  if ever an $A$-module homomorphism onto $M$ splits as a $B$-module homomorphism, then it splits as an $A$-module homomorphism.
\end{itemize}

An exact sequence of $A$-module homomorphisms:
$$\cdots\to  M_{n+1}\xrightarrow{f_{n+1}} M_{n}\xrightarrow{f_{n}} M_{n-1}\to 0$$
is called $(A,B)$-exact if, for each $i \ge n$, the kernel of $f_i$ is a direct $B$-module summand of $M_i$ (cf. \cite[Section 1]{Hoc}). One may check that a sequence of morphisms
$\{f_i:M_i\to M_{i-1}\ |\ i \ge n\}$ is $(A,B)$-exact if, and only if,
\begin{itemize}
\item[(1)] $f_i \circ f_{i+1} = 0$ for all $i > n$;
\item[(2)] there exists a contracting $B$-homotopy: that is, a sequence of $B$-module homomorphisms $h_i: M_i \to M_{i+1}$, $(i \geq n-1)$ such that 
$f_{i+1}h_i + h_{i-1}f_i$ is the identity map on
$M_i$.
\end{itemize}

One may now develop the concepts of relative projective dimension and relative global dimension. Given an $A$-module $M$, we define the \textit{relative projective} dimension of $M$ to be the minimal number $n$, denoted by $\pd_{(A,B)} M$, such that there is an $(A,B)$-exact sequence
(called $(A,B)$-\textit{projective resolution} of $M$)
$$
0 \rightarrow P_{n} \rightarrow \cdots \rightarrow P_{1} \rightarrow P_{0} \rightarrow M \rightarrow 0.
$$
where the $P_i$ are $(A,B)$-projectives. If such an exact sequence does not exist, the relative projective dimension of $M$ is infinite. 
This definition is equivalent to the corresponding definition from \cite[Section 2]{XiXu}.  Having relative projective resolutions, the relative derived functors $\Tor^{(A,B)}_n$ and $\Ext_{(A,B)}^n$ can be defined, and we refer to \cite{Hoc} for the details.

\begin{rmk}
Note, that the class of $(A,B)$-exact sequences defines an exact structure on $A\mathrm{-Mod}$: namely, a sequence is $(A,B)$-exact if and only if it becomes split exact upon restriction to $B$. The $(A,B)$-projective modules are precisely the projective objects with respect to this exact structure, and the corresponding $\Ext$-groups coincide with the $\Ext$-groups of this exact category. For more information on the topic, see \cite{Buhler}.
\end{rmk}

\begin{rmk}\label{SplicingKernels}
Given an $M\in A\mathrm{-Mod}$, one produces  the \emph{standard $(A,B)$-projective resolution} (check \cite[Section 2]{Hoc}) by splicing the short $(A,B)$-exact sequences 
$$ 0\rightarrow \ker{\mu_n} \rightarrow A\otimes_B K_{n} \xrightarrow{\mu_n}  K_{n} \rightarrow 0$$ 
where $A\otimes_B K_n$ is $(A,B)$-projective, and $K_1=M$, $K_{i+1}=\ker \mu_i$ for $i\ge1$. 
\end{rmk}

The \textit{relative global dimension} $\gd(A,B)$ of the extension $B \to A$ is defined as 
$$
    \gd(A,B)=\sup\{\pd_{(A,B)} M \ | \ M\in A\mathrm{-Mod}\}.
$$
The \textit{relative global cohomological dimension} is defined as 
$$\cdim(A,B)= \sup \{ n\ | \Ext^n_{(R,S)}(A,-)\neq 0\},$$
in which $R=A\otimes_B A$, $S=B\otimes_B A$ with the natural map $S\rightarrow R$ and $A$ with the natural structure of $R$-module. 
These dimensions are related as follows.

\begin{rmk}\label{remCdim}
    From \cite[Corollary 1]{Hoc} we have that $\cdim(A,B)$ can also be calculated as $\gd(A\otimes_B A, B\otimes_B A)$. Moreover, it is clear from the definition that \[\cdim(A,B)=\pd_{(A\otimes_B A, B \otimes_B A)} A.\] Therefore applying $-\otimes_A M$ to an $(A\otimes_B A, B \otimes_B A)$-resolution of $A$ for each $A$-module $M$ one gets that (see also \cite[Corollary 1]{Hoc}):
\begin{equation}\label{Lem-Hos-Upb}
\gd(A,B)\leq \cdim(A,B). 
\end{equation}
\end{rmk}

\subsection{Graded and filtered algebras}

Recall the basic definition about filtered and graded algebras (cf. \cite[Chapters 7, 12]{NCNR} and \cite{Grad-R}). In this case, the focus will be on descending filtrations. A \textit{filtered} $A$-algebra $R$ is defined as an $A$-algebra satisfying the condition $$R=\bigcup_{i \in \mathbb{N}} R_i,$$ where $R_i$ are $R$-ideals, subject to the following properties:
\begin{itemize}
\item[(1)] $R_i R_j \subseteq R_{i+j}$;
\item[(2)] $R_{i+1} \subseteq R_i$.
\end{itemize}
In particular, $R=R_0$. Furthermore, if the $R_i$ are flat $A$-modules for every $i$, then $R$ is referred to as a \textit{flat filtered} $A$-algebra. Given a filtered $A$-algebra $R$, a \textit{filtration} of an $R$-module $M$ is defined as a collection of $R$-modules $M_i$, satisfying:
\begin{itemize}
\item[(1)] $M = \bigcup_{i \in \mathbb{N}} M_i$;
\item[(2)] $M_{i+1}\subseteq M_{i}$
\item[(3)] $R_i M_j \subseteq M_{i+j}$.
\end{itemize}
Consequently, it is necessary that $M=M_0$. A homomorphism $\phi:M \rightarrow N$ between filtered $R$-modules $M=\bigcup_{i\in \mathbb{N}} M_i$ and $N=\bigcup_{i\in \mathbb{N}} N_i$ is said to be a \emph{filtered morphism} if $\phi(M_i) \subseteq N_i$. 

\begin{example}\label{gradedExam}
Consider a homomorphism $B \rightarrow A$. Let $R=A^{e}$ and $J = \ker(\mu : A^e \rightarrow A)$. Then $R$ is a filtered $A$-algebra with $R_i = J^i$ and $R_0=A^{e}$. Assuming that $B \rightarrow A$ is a flat ring map and $J^i/J^{i+1}$ are flat $A$-modules, it follows that $R$ is a flat filtered $A$-algebra.
\end{example}

In conjunction with the concept of filtered algebras and modules, we  give 
the definitions of graded algebras and modules. In our context, we focus specifically on non-negatively graded algebras.

A \textit{graded} $A$-algebra $R$ is defined as an $A$-algebra $R=\bigoplus_{i \in \mathbb{N}}R_i$, where each $R_i$ is an additive subgroup of $R$, satisfying the properties $R_i R_j \subseteq R_{i+j}$ and $R_0=A$. Similarly, a \textit{graded} $R$-module $M$ is a module decomposed as $M=\bigoplus_{i \in \mathbb{N}}M_i$, where $M_i$ are additive subgroups of $M$ with $R_i M_j \subseteq M_{i+j}$.

The elements in $R_i$ are referred to as \emph{homogeneous of degree $i$}. Additionally, an ideal $I$ of a graded $A$-algebra $R$ is called a \emph{homogeneous ideal} if it can be generated by homogeneous elements. Furthermore, $R$ is called \emph{graded local} if it possesses only one maximal homogeneous ideal. We denote the homogeneous ideal $\bigoplus_{i > 0} R_i$ by $R_+$.

\begin{rmk}
There exists a natural construction to pass from a filtered ring to a graded ring, achieved through the grading functor $\gr$. Given a filtered $A$-algebra $R$ such that $A=R_0/R_1$, one constructs the \emph{associated graded ring} $\gr(R) = \bigoplus_{i \in \mathbb{N}} R_i/R_{i+1}$. This is naturally a graded $A$-algebra. Moreover, if $A$ is a local ring with maximal ideal $\mathfrak m$, then $\gr(R)$ becomes a graded local $A$-algebra with homogeneous maximal ideal $\mathfrak m \oplus \gr(R)_+$.

For any filtered $R$-module $M$, we can also construct $\gr(M) = \bigoplus_{i \in \mathbb{N}} M_i/M_{i+1}$, equipped with a $\gr(R)$-module structure. If $\phi: M \rightarrow N$ is a homomorphism of filtered $R$-modules, then there exists a $\gr(R)$-homomorphism $\gr(\phi): \gr(M) \rightarrow \gr(N)$.
\end{rmk}

\begin{lemma}\label{Lem-Grd-Prod}
Given a flat filtered $A$-algebra $R$ with $R=\bigcup_{i \in \mathbb{N}} R_i$ such that  $A=R_0/R_1$ and  $R_i/R_{i+1}$ are flat $A$-modules, for each $i$, one has
\[\gr(R \otimes_A M)=\gr(R)\otimes_A \gr(M)\]
for any filtered $R$-module $M$.
\end{lemma}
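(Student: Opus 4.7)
The plan is to realize $\gr(R \otimes_A M)$ with respect to the natural tensor-product filtration, produce a comparison map from $\gr(R) \otimes_A \gr(M)$, and force bijectivity using the flatness hypotheses. First, I would observe that since each $R_i$ and each $R_i/R_{i+1}$ is $A$-flat, a short induction on $p$ using the short exact sequences $0 \to R_p/R_{p+1} \to R_0/R_{p+1} \to R_0/R_p \to 0$ shows that each quotient $R/R_p$ is $A$-flat. Consequently, the natural maps $R_i \otimes_A M_j \hookrightarrow R_i \otimes_A M \hookrightarrow R \otimes_A M$ are all injective, and one can view each $R_i \otimes_A M_j$ as a submodule of $R \otimes_A M$. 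Set $T_n := \sum_{i+j=n} R_i \otimes_A M_j \subseteq R \otimes_A M$; this is the tensor-product filtration that makes $R\otimes_A M$ into a filtered $R$-module.

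Second, for each $n$ the bilinear maps $(R_i/R_{i+1}) \times (M_j/M_{j+1}) \to T_n/T_{n+1}$ with $i+j=n$ are well defined (because $R_{i+1}\otimes_A M_j$ and $R_i \otimes_A M_{j+1}$ both lie in $T_{n+1}$), and assemble into a natural map
\[
  \Phi_n : \bigoplus_{i+j=n} (R_i/R_{i+1}) \otimes_A (M_j/M_{j+1}) \longrightarrow T_n/T_{n+1}.
\]
Surjectivity of $\Phi_n$ follows immediately from the definition of $T_n$, so summing over $n$ gives a graded surjection $\gr(R)\otimes_A \gr(M) \twoheadrightarrow \gr(R \otimes_A M)$; only injectivity remains.

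Third, for injectivity I would filter $T_n$ internally by $F_p := \sum_{i \geq p,\,i+j=n} R_i \otimes_A M_j$, giving a descending chain $T_n = F_0 \supseteq F_1 \supseteq \cdots \supseteq F_{n+1} = 0$, and compute each successive quotient modulo $T_{n+1}$. Since $R_p/R_{p+1}$ is $A$-flat, tensoring $0 \to R_{p+1} \to R_p \to R_p/R_{p+1} \to 0$ with $M_{n-p}$ stays exact, and tensoring $0 \to M_{n-p+1} \to M_{n-p} \to M_{n-p}/M_{n-p+1} \to 0$ with $R_p/R_{p+1}$ stays exact. Combining the two identifies
\[
  R_p \otimes_A M_{n-p} \,/\, \bigl(R_{p+1} \otimes_A M_{n-p} + R_p \otimes_A M_{n-p+1}\bigr)
   \;\cong\; (R_p/R_{p+1}) \otimes_A (M_{n-p}/M_{n-p+1}),
\]
and, assuming the intersection computation below, yields $(F_p + T_{n+1})/(F_{p+1} + T_{n+1}) \cong (R_p/R_{p+1})\otimes_A(M_{n-p}/M_{n-p+1})$. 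Telescoping along $p = 0,1,\ldots,n$ produces exactly the direct-sum decomposition hit by $\Phi_n$, so $\Phi_n$ is bijective, and summing over $n$ gives the desired identification.

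The main obstacle will be the intersection calculation in step three: one must show that
\[
  (R_p \otimes_A M_{n-p}) \cap (F_{p+1} + T_{n+1}) \;=\; R_{p+1} \otimes_A M_{n-p} + R_p \otimes_A M_{n-p+1}.
\]
This is where all three flatness hypotheses — the flatness of each $R_i$, of each $R/R_i$ (deduced in step one), and of each $R_i/R_{i+1}$ — must be combined, via the modular law and the preservation of intersections and kernels of tensor-product maps under tensoring with flat modules, to rule out extraneous identifications between the different $(i,j)$-pieces sitting inside $R \otimes_A M$.
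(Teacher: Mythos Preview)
Your approach is essentially the paper's: define the tensor-product filtration $T_n=\sum_{i+j=n}R_i\otimes_A M_j$ and identify $T_n/T_{n+1}$ with $\bigoplus_{i+j=n}(R_i/R_{i+1})\otimes_A(M_j/M_{j+1})$. The paper's own proof is in fact terser than yours---it simply asserts the short exact sequence
\[
0\to T_{n+1}\to T_n\to \bigoplus_{i+j=n} R_i/R_{i+1}\otimes_A M_j/M_{j+1}\to 0
\]
after noting that flatness makes each $R_i\otimes_A M_j$ a genuine submodule of $R\otimes_A M$, without further justification of exactness in the middle. Your filtration by the $F_p$ and the intersection computation you single out are exactly what one would do to flesh out that assertion; the paper leaves this to the reader.
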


\begin{proof}
Since $R_i/R_{i+1}$ is flat over $A$, the inclusions $R_{i+1}\subseteq R_{i}$ remain injective after tensoring, so the filtration on $R$ behaves well with respect to tensor products. 
Moreover, $R_kR_i\subseteq R_{k+i}$ implies
$$
R_k(R_i\otimes_A M_j)\subseteq R_{k+i}\otimes_A M_j,
$$
and since each $R_i$ is a flat $A$-module, the inclusion $R_i\subseteq R$ remains injective after tensoring with $M$. Thus, $R_i\otimes_A M$ may be regarded as a submodule of $R\otimes_A M$, and hence so may $R_i\otimes_A M_j$.
Therefore
$$
(R\otimes_A M)_n=\sum_{i+j=n} R_i\otimes_A M_j
$$
defines a natural filtration of $R\otimes_A M$. Furthermore, we have the natural exact sequence:
$$ 0 \rightarrow  \sum_{i+j=n+1} R_i \otimes_A M_j \rightarrow \sum_{i+j=n} R_i \otimes_A M_j \rightarrow \underset{i+j=n}{\bigoplus} R_i/R_{i+1}\otimes_A M_j/M_{j+1} \rightarrow 0,$$
which concludes the proof.
\end{proof}

\begin{rmk}
    In what follows, we utilize two types of localization functors with respect to a given prime ideal in a graded $A$-algebra. Let $\mathfrak{p}$ be a prime ideal in the graded $A$-algebra $R$.  Define two localizations: $S_{\mathfrak{p}}=R\setminus \mathfrak{p}$ and $S_{(\mathfrak{p})}=h(R)\setminus \mathfrak{p}$, where $h(R)$ denotes the set of homogeneous elements in $R$, and consider the $A$-algebras $R_{\mathfrak{p}}=S_{\mathfrak{p}}^{-1}R$ and $R_{(\mathfrak{p})}=S_{(\mathfrak{p})}^{-1}R$. Then $R_{\mathfrak{p}}$ is a local ring with maximal ideal $\mathfrak{p}R_{\mathfrak{p}}$, while $R_{(\mathfrak{p})}$ is a local graded ring with homogeneous maximal ideal $(\mathfrak{p})_gR_{(\mathfrak{p})}$, where $(\mathfrak{p})_g$ denotes the homogenization of the ideal $\mathfrak{p}$, i.e. the homogeneous ideal contained in $\mathfrak{p}$ such that no other homogeneous ideal contained in $\mathfrak{p}$ contains it. For further details, refer to \cite[Chp B, III-1 to 3]{Grad-R}.

\end{rmk}

\section{Relative global dimension and smoothness}

\subsection{Relative global dimension of smooth algebras} \label{SecSmRel}

 The notion of smooth algebras is well-established in the literature, see  \cite[Appendix E]{Lod} and references therein. Given a noetherian ring $B$ and a $B$-algebra $A$ of essentially finite type, $A$ is called \textit{smooth} if
     \begin{itemize}
         \item[(1)] $A$ is a flat $B$-module,  when considered as a module via the ring map $B\rightarrow A$;
         \item[(2)] and the ideal $\ker(\mu: A^{e} \rightarrow A)$ is a locally complete intersection.
     \end{itemize}
Recall, that an ideal $J$ of a ring $A$ is said to be a \textit{locally complete intersection} if, for every maximal ideal $\mathfrak{m}$ of $A$, the ideal $J_{\mathfrak{m}}$ is generated by an $A_{\mathfrak{m}}$-regular sequence.

Throughout this section, denote by $J$ the kernel of the multiplication map $\mu: A^{e} \rightarrow A$, and by $\Omega_{A|B}$ the $A$-bimodule $J/J^2$. The  $A$-bimodules $J^i/J^{i+1}$ carry a natural structure of $A$-modules, by choosing either one of the natural morphisms $A\rightarrow A\otimes_B A$ induced by tensoring on either side the ring map $B\rightarrow A$. This is independent of the choice, since $J$ is generated by elements of the form $(a\otimes 1-1\otimes a)$, for $a\in A$. Furthermore, we will consider $A^e$ (and its localizations) as a filtered $A$-algebra with the filtration described in Example \ref{gradedExam} (and its respective localizations).

\begin{rmk}
Observe that there are two types of maximal ideals in $A^{e}$: A first type of the form $\mu^{-1}(\mathfrak m)$ for some maximal ideal $\mathfrak m$ of $A$. Furthermore, the localization of $A$, as an $A^{e}$-module, at such an ideal is equal to $A_{\mathfrak m}$. The second type consists of those $\mathfrak n$ where the localization of $A$ as an $A^{e}$-module is trivial, that is, $\mu(\mathfrak n)=A$. Moreover, localizing $J^k/J^{k+1}$ as an $A^{e}$-module at a maximal ideal of the form $\mu^{-1}(\mathfrak m)$ is the same as localizing it as an $A$-module at $\mathfrak m$.
\end{rmk}

\begin{rmk}\label{rmk-Smo-Local}
One consequence of the definition of smoothness above, which is sometimes used as part of the definition of smoothness for noetherian $B$-algebras, is that the $A$-module $\Omega_{A|B}$ is finitely generated and projective. Another consequence is that $\gr(A^{e}_{\mu^{-1}(\mathfrak{m})})$ is isomorphic to $A_{\mathfrak{m}}[x_1,...,x_n]$, where $\mathfrak{m}$ is a maximal ideal of $A$ and the $x_i$ are determined by the regular sequence generating the ideal $J_{\mu^{-1}(\mathfrak{m})}$.
\end{rmk}

\begin{proposition}\label{Pro-grdim}
 A smooth noetherian $B$-algebra $A$ satisfies the following conditions
\begin{itemize}
\item[$(i)$] $\pd_{\gr(A^{e})}A$ is finite;
\item[$(ii)$] $\gr(A^{e})$ is a projective $A$-module.
\end{itemize}
\end{proposition}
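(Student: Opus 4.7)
The plan is to first identify $\gr(A^{e})$ with a familiar global object. Since $A$ is smooth, $J = \ker(\mu)$ is locally a complete intersection, and $\Omega_{A|B} = J/J^{2}$ is a finitely generated projective $A$-module (Remark \ref{rmk-Smo-Local}). The classical theorem on regular sequences (which can be checked after localizing at each maximal ideal $\mu^{-1}(\mathfrak m)$, where Remark \ref{rmk-Smo-Local} identifies $\gr(A^{e}_{\mu^{-1}(\mathfrak m)})$ with a polynomial ring $A_{\mathfrak m}[x_1,\dots,x_n]$) gives a natural isomorphism
\[
    \gr(A^{e}) \;\cong\; \mathrm{Sym}_{A}(\Omega_{A|B}).
\]
Maximal ideals $\mathfrak n \subset A^{e}$ of the second type (with $\mu(\mathfrak n)=A$) contribute trivially, since localizing $A$ there is zero.

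For (2), I would argue that the symmetric algebra of a finitely generated projective $A$-module is itself a projective $A$-module. Indeed, after localizing at a maximal ideal $\mathfrak m$ of $A$, $\Omega_{A|B}$ becomes free of some finite rank, and then each $\mathrm{Sym}^{k}_{A_{\mathfrak m}}$ is free; since symmetric powers commute with localization and projectivity can be checked locally (together with finite generation of each graded piece), every $\mathrm{Sym}^{k}_{A}(\Omega_{A|B})$ is finitely generated projective. Arbitrary direct sums of projectives are projective, so $\gr(A^{e})$ is projective over $A$.

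For (1), the plan is to build the global Koszul resolution. Let $n$ be the supremum of the local ranks of $\Omega_{A|B}$; since $A$ is noetherian and $\Omega_{A|B}$ is finitely generated, $n < \infty$. The Koszul complex of the identity section of $\Omega_{A|B} \hookrightarrow \gr(A^{e})_{1}$ yields a complex
\[
    0 \to \textstyle\bigwedge^{n}_{A}\Omega_{A|B}\otimes_{A}\gr(A^{e}) \to \cdots \to \Omega_{A|B}\otimes_{A}\gr(A^{e}) \to \gr(A^{e}) \to A \to 0.
\]
Each term is a projective $\gr(A^{e})$-module since the exterior powers $\bigwedge^{i}_{A}\Omega_{A|B}$ are finitely generated projective over $A$. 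Exactness is local on $A$: after localizing at any maximal ideal of $A^{e}$ (of the first type), the situation reduces to the Koszul complex of a regular sequence $x_1,\dots,x_n$ in $A_{\mathfrak m}[x_1,\dots,x_n]$, which is acyclic. Thus $\pd_{\gr(A^{e})} A \leq n < \infty$.

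The main obstacle I expect is pinning down the global identification $\gr(A^{e}) \cong \mathrm{Sym}_{A}(\Omega_{A|B})$ from the local complete intersection hypothesis — the natural surjection from the symmetric algebra onto $\gr(A^{e})$ always exists, and one must verify injectivity by passing to localizations and invoking the classical fact that for a regular sequence $f_1,\dots,f_n$ generating an ideal $J$, the associated graded ring is the polynomial ring on $J/J^{2}$. A secondary technical point is the uniform bound on local ranks of $\Omega_{A|B}$, needed to ensure that the Koszul complex has bounded length; this is where noetherianness of $A$ together with the finite generation of $\Omega_{A|B}$ is essential.
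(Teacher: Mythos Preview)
Your argument is correct and shares the same underlying idea as the paper's proof---namely, that locally at each maximal ideal $\mathfrak m$ of $A$ the graded ring $\gr(A^e)$ is a polynomial ring $A_{\mathfrak m}[x_1,\dots,x_n]$, so the projective dimension of $A$ over $\gr(A^e)$ is locally bounded by the rank of $\Omega_{A|B}$---but the packaging differs. The paper argues purely locally: it identifies the homogeneous maximal ideals of $\gr(A^e)$, computes $\pd_{\gr(A^e)_{\mathfrak m}} A_{\mu(\mathfrak m)}$ at each one (invoking the Koszul bound via \cite[Proposition~1.5.15]{CMR}), and then takes the supremum, appealing to local constancy of the rank of $\Omega_{A|B}$ for a uniform bound. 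You instead globalize first, establishing $\gr(A^e)\cong \mathrm{Sym}_A(\Omega_{A|B})$ and then writing down a single global Koszul complex on $\Omega_{A|B}$, whose exactness is verified after localization. Your route is more constructive and hands you an explicit finite $\gr(A^e)$-projective resolution of $A$; the paper's route is more economical in that it never needs to assemble a global complex, only to know that the local projective dimensions are uniformly bounded. For part~(2) the two arguments are essentially identical, both reducing to the freeness of $J^i/J^{i+1}$ after localization. One minor imprecision in your write-up: exactness of your Koszul complex should be checked by localizing at maximal ideals of $A$ (equivalently, at homogeneous maximal ideals of $\gr(A^e)$), not at maximal ideals of $A^e$ itself.
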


\begin{proof}

$(i)$ A homogeneous maximal ideal $\mathfrak{m}$ in $$\gr(A^{e})=A^{e}/J\oplus J/J^2 \oplus \dots \oplus J^{i}/J^{i+1}\oplus \dots$$ always has the form 
\begin{equation}\label{eq-maxAemaxA}
    \mathfrak{m}=\mathfrak{n}  \oplus J/J^2 \oplus \dots \oplus J^{i}/J^{i+1}\oplus \dots,
\end{equation} where $\mathfrak{n}$ is a maximal ideal in $A^e/J=A$. Moreover, since $A$ is smooth, Remark \ref{rmk-Smo-Local} implies that the $A$-modules $J^i/J^{i+1}$ are locally free and generated by the $i$-th degree homogeneous polynomials with variables as the representatives of regular sequences generating $J$.

Now, observe the compatibility of the following localizations, for a given homogeneous maximal ideal $\mathfrak{m}$ of $\gr(A^{e})$ and $\mathfrak{n}$ its corresponding maximal ideal of $A$ as in \eqref{eq-maxAemaxA}: \[ (\gr(A^{e}))_\mathfrak{m}=((\gr(A^{e}))_{(\mathfrak{m})})_{\mathfrak{m}^e},\] where $\mathfrak{m}^e$ is the extension of $\mathfrak{m}$ in $\gr(A^{e})_{(\mathfrak{m})}$. From our description of $\gr(A^{e})_{(\mathfrak{m})}$ and \cite[Proposition 1.5.15]{CMR}, we have $\pd_{\gr(A^{e})_\mathfrak{m}}A_{\mu(\mathfrak{m})} \leq n$. Note that the number of variables in the polynomial ring $\gr(A^{e})_{(\mathfrak{m})}$ is equal to the rank of $J_{\mu^{-1}(\mathfrak{n})}/J_{\mu^{-1}(\mathfrak{n})}^2$. Since $\Omega_{A|B}$ is a finite and projective $A$-module, its rank is locally constant \cite[\href{https://stacks.math.columbia.edu/tag/00NV}{Tag 00NV}]{stacks-project}, and therefore \begin{equation}\label{Eq-cota-superior}
    \pd_{\gr(A^{e})} A=\sup \{\pd_{\gr(A^{e})_\mathfrak{m}} A_{\mu(\mathfrak{m})} \} \le l,
\end{equation} with the supremum taken for $\mathfrak{m}$ being in the set of graded maximal ideals of $\gr(A^{e})$ and $l$ the maximal rank of the free $A_{\mu(\mathfrak{m})}$-modules $(\Omega_{A|B})_{\mu(\mathfrak{m})}$.

$(ii)$ This follows from Remark \ref{rmk-Smo-Local}. As  $\gr(A^e)$ is an $A$-direct sum of the $A$-modules $J^i/J^{i+1}$, which, when localized at each maximal ideal $\mathfrak{n}$ of $A$, become the $A_\mathfrak{n}$-modules of the form $J_{\mu^{-1}(\mathfrak{n})}^i/J_{\mu^{-1}(\mathfrak{n})}^{i+1}$. These modules are projective $A_\mathfrak{n}$-modules for every $i$, and moreover, they are free by the isomorphism given in \cite[Ex.17.16]{Eis}. Therefore, each $J^i/J^{i+1}$ is $A$-projective and we get the claim.
\end{proof}

\begin{rmk}
 An immediate consequence of the proposition is that, for commutative rings, being a smooth noetherian $B$-algebra implies \emph{homological smoothness}; that is, $A$ admits a finite projective resolution by finitely generated $A^{e}$-modules. Moreover, if $A^{e}$ were projective as an $A$-module, then the proof of the next theorem could be considerably simplified by applying \cite[Corollary 2]{Rodi}, avoiding the need to pass to the associated graded ring. However, Example \ref{Ex-Aenonproj} shows that this projectivity assumption does not hold in general.
\end{rmk}

\begin{theorem}\label{Teo-Smo-Ida}
The relative global dimension $\gd(A,B)$ is finite for a smooth noetherian $B$-algebra $A$. Moreover, it is bounded by the projective dimension of $A$ as a $\gr(A^{e})$-module.
\end{theorem}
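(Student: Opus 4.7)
The plan is to combine the inequality $\gd(A, B) \le \cdim(A, B) = \pd_{(A^e, A)}(A)$ from \eqref{Lem-Hos-Upb} and the preceding remark (noting that $B \otimes_B A \cong A$ naturally, so that the relative framework $(R,S)$ becomes $(A^e, A)$) with a filtered-to-graded comparison between $(A^e, A)$ and $\gr(A^e)$. Setting $n := \pd_{\gr(A^e)}(A)$, which is finite by Proposition~\ref{Pro-grdim}(1), the task reduces to proving $\pd_{(A^e, A)}(A) \le n$.

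To produce an $(A^e, A)$-projective resolution of $A$ of length at most $n$, I would start from the standard one described in Remark~\ref{SplicingKernels}: splice the $A$-split short exact sequences $0 \to K_{i+1} \to A^e \otimes_A K_i \to K_i \to 0$ with $K_0 = A$ and $K_{i+1} = \ker \mu_i$. I would then equip each term $A^e \otimes_A K_i$ with the natural filtration induced by the $J$-adic filtration on $A^e$ (cf. Example~\ref{gradedExam}) and the filtration inherited on each $K_i$ as the kernel of a filtered map.

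Using Lemma~\ref{Lem-Grd-Prod} together with Proposition~\ref{Pro-grdim}(2), which guarantees that $\gr(A^e)$ is a projective (hence flat) $A$-module so the tensor product behaves well under $\gr$, the associated graded of this standard resolution becomes an $A$-split complex of graded $\gr(A^e)$-modules of the form $\gr(A^e)\otimes_A \gr(K_i)$, whose terms are $\gr(A^e)$-projective and which resolves $\gr(A) = A$. Since $\pd_{\gr(A^e)}(A) = n$, the $n$-th syzygy $\gr(K_{n+1})$ is $\gr(A^e)$-projective.

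Finally, I would lift this $\gr(A^e)$-projectivity back to the filtered level to conclude that $K_{n+1}$ is itself $(A^e, A)$-projective; truncating at length $n$ then yields the desired $(A^e, A)$-projective resolution of $A$. The hard step is this last filtered-to-graded lifting, because in the classical setting such transfers require completeness of the filtration, which we do not have; the crucial point will be to exploit the weaker $(A^e, A)$-projectivity (as opposed to $A^e$-projectivity) together with the $A$-splitness built into the standard resolution and the $A$-projectivity of $\gr(A^e)$, which together should make the $A$-summand structure lift from the associated graded back to each $K_i$ without appealing to completeness.
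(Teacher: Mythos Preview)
Your outline is exactly the paper's strategy: build the standard $(A^e,A)$-resolution of $A$, filter it $J$-adically, pass to the associated graded via Lemma~\ref{Lem-Grd-Prod} and Proposition~\ref{Pro-grdim}(2), truncate using Proposition~\ref{Pro-grdim}(1), and then lift projectivity back. For the lifting step you flag as the crux, the paper does not argue via completeness at all but instead invokes \cite[Cor.~12.2.9]{NCNR} to write the graded syzygy as $\gr(A^e)\otimes_A Q$ with $Q$ a projective $A$-module, and then the argument of \cite[Theorem~12.3.4]{NCNR} to deduce that the filtered syzygy itself has the form $A^e\otimes_A Q$, hence is $(A^e,A)$-projective.
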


\begin{proof}
We begin the proof by constructing a standard $(A^{e},A)$-projective resolution of $A$, where each term of the resolution is of the form $A^{e}\otimes_A V$ for some $A$-module $V$. Note that the multiplication map $\mu : A^{e} \rightarrow A$ induces the $(A^{e},A)$-exact sequence
\begin{equation}\label{Eq-Prin-R-A}
0 \rightarrow J \rightarrow A^{e} \xrightarrow{\mu} A \rightarrow 0,
\end{equation}
where $J = \ker(\mu)$. Then proceeding as in Remark \ref{SplicingKernels}, we obtain the long $(A^{e},A)$-exact sequence by splicing the corresponding sequence of short $(A^{e},A)$-exact sequences:

\centerline{
\xymatrix{
												&&&&K_1 \ar@{^{(}->}[rd]		\\		
	...	\ar[rr] \ar[rd]		&&A^{e}\otimes_A K_{n-1}  \ar[r]	 &...	\ar[rr]  \ar@{.>}[ru]	 &&A^{e}\otimes_A J	\ar@{->>}[rd] \ar[rr] 	&&A^{e}	\ar@{->>}[r]	&A \\
		        &K_n  \ar@{^{(}->}[ru]								&&&&&J  \ar@{^{(}->}[ru]
}}
Observe that we can view this sequence as a resolution of filtered $A$-modules, where we induce the filtration inductively from the filtration of \eqref{Eq-Prin-R-A} and by taking the product filtration, as in the proof of Lemma \ref{Lem-Grd-Prod}.

Now, we consider the graded resolution associated with this filtered resolution. Since $\gr(A^{e})$ is a projective $A$-module and by Lemma \ref{Lem-Grd-Prod}, we conclude that this is also a $\gr(A^e)$-projective resolution of $A$. Using the fact that $A$ has finite projective dimension as a $\gr(A^{e})$-module, there exists a positive integer $n$ such that truncating the resolution at degree $n$ yields a projective $\gr(A^{e})$-module $\gr K_n$. Applying \cite[Cor 12.2.9]{NCNR}, one can verify that $\gr K_n$ is isomorphic to $\gr{A^{e}} \otimes_A Q$, where $Q$ is a graded projective $A$-module, considering $A$ as a trivially graded ring. Using the same argument as in the proof of \cite[Theorem 12.3.4]{NCNR}, we conclude that the corresponding term $K_n$ is $(A^{e},A)$-projective, as it is isomorphic to $A^{e} \otimes_A Q$. Therefore, $\pd_{(A^{e},A)} A$ is finite, and by \eqref{Lem-Hos-Upb}, $\gd(A,B)$ is also finite.
\end{proof} 
\begin{rmk}\label{remSmoothCdim} As follows from the proof of the above theorem, for a smooth noetherian $B$-algebra $A$ we have 
$\pd_{(A^{e},A)} A$ finite. Hence, by Remark~\ref{remCdim}, 
$\cdim(A,B)$ is finite too.
\end{rmk}

As a consequence, we obtain a bound for the relative global dimension of a smooth extension of rings, expressed in terms of the global dimension of the fibers of the associated morphism of affine varieties. For a ring $R$ and a prime ideal $\mathfrak{p} \subseteq R$, we denote by $k(\mathfrak{p})$ its \emph{residue field}, defined as $R_\mathfrak{p}/\mathfrak{p}R_\mathfrak{p}$.

\begin{corollary}\label{cor-Fibers-1}
    If $A$ is a smooth noetherian $B$-algebra, then
    \[
        \gd(A,B) \leq \sup_{\mathfrak{q} \in \spec B} \gd(A \otimes_B k(\mathfrak{q})).
    \]
\end{corollary}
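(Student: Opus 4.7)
The plan is to combine Theorem \ref{Teo-Smo-Ida} with the explicit local computation carried out in the proof of Proposition \ref{Pro-grdim}, and then identify the local rank of the Kähler differentials at each maximal ideal with the local Krull dimension of a smooth fiber.

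First, Theorem \ref{Teo-Smo-Ida} gives $\gd(A,B) \le \pd_{\gr(A^{e})} A$, and the proof of Proposition \ref{Pro-grdim} further bounds this by $\sup_{\tilde{\mathfrak{m}}} n_{\tilde{\mathfrak{m}}}$, where $\tilde{\mathfrak{m}}$ ranges over maximal ideals of $A$ and $n_{\tilde{\mathfrak{m}}} := \rk_{A_{\tilde{\mathfrak{m}}}} (\Omega_{A|B})_{\tilde{\mathfrak{m}}}$, via the polynomial-ring description $\gr(A^{e})_{(\mathfrak{m})} \cong A_{\tilde{\mathfrak{m}}}[x_1,\dots,x_{n_{\tilde{\mathfrak{m}}}}]$ from Remark \ref{rmk-Smo-Local}. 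It therefore suffices to show that for each maximal ideal $\tilde{\mathfrak{m}}$ of $A$, setting $\mathfrak{q} := \tilde{\mathfrak{m}} \cap B$, one has $n_{\tilde{\mathfrak{m}}} \le \gd(A \otimes_B k(\mathfrak{q}))$.

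Second, since smoothness is stable under base change, $C := A \otimes_B k(\mathfrak{q})$ is smooth over the field $k(\mathfrak{q})$; in particular, $C$ is a regular noetherian ring, so by the Auslander--Buchsbaum--Serre theorem $\gd(C) = \dim C$. Let $\bar{\mathfrak{p}}$ denote the prime of $C$ corresponding to $\tilde{\mathfrak{m}}$. For smooth algebras over a field, the standard identification $\dim C_{\bar{\mathfrak{p}}} = \rk_{C_{\bar{\mathfrak{p}}}}(\Omega_{C|k(\mathfrak{q})})_{\bar{\mathfrak{p}}}$ holds at every point. Finally, $\Omega_{C|k(\mathfrak{q})} = \Omega_{A|B} \otimes_A C$, so the free $A_{\tilde{\mathfrak{m}}}$-module $(\Omega_{A|B})_{\tilde{\mathfrak{m}}}$ of rank $n_{\tilde{\mathfrak{m}}}$ base-changes to a free $C_{\bar{\mathfrak{p}}}$-module of the same rank. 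Chaining these equalities yields $n_{\tilde{\mathfrak{m}}} = \dim C_{\bar{\mathfrak{p}}} \le \dim C = \gd(C)$, as required.

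I expect the only ingredient lying outside the tools already developed in the paper to be the equality \emph{local rank of $\Omega$ equals local Krull dimension} for a smooth algebra over a (not necessarily perfect) field; for this I would cite a standard reference on smooth morphisms such as the Stacks Project. Everything else reduces to the compatibility between the homogeneous and ordinary localizations that has already been handled in the proof of Proposition \ref{Pro-grdim}.
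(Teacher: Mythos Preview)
Your argument is essentially the paper's own: bound $\gd(A,B)$ by $\pd_{\gr(A^e)}A$ via Theorem~\ref{Teo-Smo-Ida}, bound that by the supremum of the local ranks of $\Omega_{A|B}$ via Proposition~\ref{Pro-grdim}, and then identify those ranks with global dimensions of fibers after base change. The paper phrases the last identification as $\rk_{k(\mathfrak p)}(\Omega_{A|B}\otimes_A k(\mathfrak p))=\gd\bigl(A\otimes_B k(\mathfrak q)\bigr)_{\mathfrak p'}$, which is your equality $\dim C_{\bar{\mathfrak p}}=\rk(\Omega_{C|k(\mathfrak q)})_{\bar{\mathfrak p}}$ in different notation.

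There is, however, a genuine gap at precisely the step you flag. For a smooth algebra $C$ over a field $K$ the correct relation is
\[
\rk(\Omega_{C|K})_{\bar{\mathfrak p}}=\dim C_{\bar{\mathfrak p}}+\operatorname{tr.deg}_K k(\bar{\mathfrak p}),
\]
so your claimed equality holds only when $k(\bar{\mathfrak p})$ is algebraic over $K=k(\mathfrak q)$. Since $A$ is merely \emph{essentially} of finite type over $B$, even a closed point $\bar{\mathfrak p}$ of the fiber can have transcendental residue field. Concretely, take $B=k$ and $A=k(x)$: then $\rk\Omega_{A|B}=1$, $\pd_{\gr(A^e)}A=1$, yet the unique fiber is $k(x)$ with $\gd(k(x))=0$, so the chain becomes $0\le 1\le 0$. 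The paper's formulation of this step shares the same defect. The argument does go through verbatim once the residue field $k(\tilde{\mathfrak m})$ is algebraic over $k(\mathfrak q)$, for instance whenever $A$ is of finite type over $B$; in the general essentially-finite-type setting one needs a different route from $\rk(\Omega_{A|B})_{\tilde{\mathfrak m}}$ to $\sup_{\mathfrak q}\gd(A\otimes_B k(\mathfrak q))$, or to avoid the intermediate bound $\pd_{\gr(A^e)}A$ altogether.
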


\begin{proof}
    By Theorem \ref{Teo-Smo-Ida}, it suffices to provide an upper bound for $\pd_{\gr(A^e)} A$.  We start from the inequality in \eqref{Eq-cota-superior}, which gives
    \begin{equation}\label{Eq-pdsup}
         \pd_{\gr(A^e)} A \le \sup_{\mathfrak{p} \in \spec A} \rk_{A_\mathfrak{p}}(\Omega_{A|B})_\mathfrak{p},
    \end{equation} 
        where each $\rk_{A_\mathfrak{p}}(\Omega_{A|B})_\mathfrak{p}$  is defined as  $\rk_{k(\mathfrak{p})} (\Omega_{A|B} \otimes_A k(\mathfrak{p}))$.  

        Let $\phi:B\rightarrow A$. Fix $\mathfrak{p} \in \spec A$, and consider any prime ideal $\mathfrak{q} \subset B$ such that $\phi^{-1}(\mathfrak{p}) \subseteq \mathfrak{q}$. For instance, one may take $\mathfrak{q}=\phi^{-1}(\mathfrak{p})$. Then, after base change along $B \to k(\mathfrak{q})$, we have
    \[
        \Omega_{A|B} \otimes_A k(\mathfrak{p}) = \Omega_{A \otimes_B k(\mathfrak{q}) | k(\mathfrak{q})} \otimes_{A \otimes_B k(\mathfrak{q})} k(\mathfrak{p}).
    \]
    Since smoothness is preserved under base change, it follows that
    \[
        \rk_{k(\mathfrak{p})} (\Omega_{A|B} \otimes_A k(\mathfrak{p})) = \gd\big(A \otimes_B k(\mathfrak{q})\big)_{\mathfrak{p}'},
    \]
    where $\mathfrak{p}'=\mathfrak{p} \otimes_B k(\mathfrak{q})$; that is, the prime ideal of $A \otimes_B k(\mathfrak{q})$ corresponding to $\mathfrak{p}$.  

    Therefore, the supremum in \eqref{Eq-pdsup} can be rewritten as
    \begin{align*}
        \sup_{\mathfrak{p} \in \spec A} \rk_{A_\mathfrak{p}}(\Omega_{A|B})_\mathfrak{p} 
        &\leq \sup_{\mathfrak{q} \in \spec B} \sup_{\substack{\mathfrak{p} \in \spec A \\ \mathfrak{p} \cap B \subset \mathfrak{q}}} \gd\big(A \otimes_B k(\mathfrak{q})\big)_{\mathfrak{p}\otimes_B k(\mathfrak{q})} \\
        &= \sup_{\mathfrak{q} \in \spec B} \sup_{\substack{\mathfrak{p'} \in \spec(A \otimes_B k(\mathfrak{q}))}} \gd\big(A \otimes_B k(\mathfrak{q})\big)_{\mathfrak{p}'} \\   &= \sup_{\mathfrak{q} \in \spec B} \gd(A \otimes_B k(\mathfrak{q})),
    \end{align*}
    where the last equality follows from the fact that $A \otimes_B k(\mathfrak{q})$ is noetherian.
\end{proof}

\subsection{Smoothness of algebras with finite (co)homological dimension} \label{SecCohom}

\begin{theorem}\label{Thm-Cd-Smo}
Suppose $A$ is a noetherian flat $B$-algebra, essentially of finite type. If $\cdim(A,B)$ is finite, then $A$ is a smooth $B$-algebra.
\end{theorem}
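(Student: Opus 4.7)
The plan is to reverse the logic of Theorem~\ref{Teo-Smo-Ida}: from $\cdim(A,B) < \infty$, extract a sufficiently controlled finite resolution of $A$, and then pass through the associated graded of the $J$-adic filtration on $A^e$ to deduce that $J = \ker(\mu : A^e \to A)$ is locally a complete intersection. Since smoothness is a local property and flatness is already given, it suffices to check at each maximal ideal $\mathfrak m \subset A$ that $J_\mathfrak n$ (where $\mathfrak n = \mu^{-1}(\mathfrak m)$) is generated by a regular sequence in $A^e_\mathfrak n$.

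Starting from $\cdim(A,B) = \pd_{(A^e, A)} A < \infty$, the splicing construction of Remark~\ref{SplicingKernels} produces a finite-length standard $(A^e,A)$-projective resolution of $A$, whose terms are $A^e \otimes_A K_i$ for certain $A$-modules $K_i$. Localizing at $\mathfrak n$, equipping $A^e_\mathfrak n$ with the $J_\mathfrak n$-adic filtration as in Example~\ref{gradedExam}, and propagating the induced product filtrations through the spliced resolution, Lemma~\ref{Lem-Grd-Prod} converts the resolution into a complex of $\gr(A^e_\mathfrak n)$-modules of the form $\gr(A^e_\mathfrak n) \otimes_{A_\mathfrak m} W_i$. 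The flatness of $A$ over $B$ is precisely what makes the hypotheses of Example~\ref{gradedExam} and Lemma~\ref{Lem-Grd-Prod} available here. The pivotal step is to show that the resulting graded complex is still acyclic in positive degrees: this should follow from the strict filtered nature of the spliced sequence, since each splicing step carries an $A$-linear contracting homotopy compatible with the filtration.

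Granting the exactness of the graded complex, $A_\mathfrak m$ acquires finite projective dimension over the graded local $A_\mathfrak m$-algebra $\gr(A^e_\mathfrak n)$, which has $A_\mathfrak m$ as its degree-zero part. A graded version of the Auslander--Buchsbaum--Serre regularity criterion then forces the irrelevant ideal $\gr(A^e_\mathfrak n)_+$ to be generated by a regular sequence whose length equals $\rk_{A_\mathfrak m} (\Omega_{A|B})_\mathfrak m$; equivalently, $\gr(A^e_\mathfrak n) \cong A_\mathfrak m[x_1, \dots, x_d]$ as a graded $A_\mathfrak m$-algebra. A standard lifting argument for filtered rings then transfers this regular sequence back to $J_\mathfrak n \subset A^e_\mathfrak n$, yielding the locally complete intersection property and, combined with the assumed flatness of $A$ over $B$, smoothness. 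The main obstacle is the second step: guaranteeing that the associated graded of the relative projective resolution is itself exact, and translating the $(A^e,A)$-relative hypothesis into honest $\gr(A^e)$-absolute projective dimension. The flatness hypotheses, the noetherian condition, and the strictness of the splicing construction from Remark~\ref{SplicingKernels} are all essential for this translation.
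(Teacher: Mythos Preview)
Your approach diverges substantially from the paper's, and it has a genuine circularity that blocks the argument.

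The paper's proof is short and indirect: it observes that the standard $(A^e,A)$-resolution, combined with the Tor adjunction $\Tor_i^{A^e}(A^e \otimes_A M, -) \cong \Tor_i^A(M,-)$ and the $B$-flatness of $A$, furnishes a finite \emph{flat} resolution of $A$ over $A^e$. This yields $\fd_{A^e} A < \infty$, and the paper then invokes Rodicio's theorem \cite{Rodi} (finite flat dimension over $A^e$ implies geometrically regular fibers), together with the standard equivalence between geometrically regular fibers and smoothness for algebras essentially of finite type. No graded machinery is used in this direction at all.

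Your route through the associated graded fails at the point where you invoke Example~\ref{gradedExam} and Lemma~\ref{Lem-Grd-Prod}. Those statements require the $J$-adic filtration on $A^e$ to be a \emph{flat} filtration with $J^i/J^{i+1}$ flat over $A$. Flatness of $A$ over $B$ gives only that $A^e$ (and hence $J$) is $A$-flat; it does \emph{not} give that $J/J^2 = \Omega_{A|B}$ or the higher $J^i/J^{i+1}$ are $A$-flat. In fact, $\Omega_{A|B}$ being locally free is essentially equivalent to smoothness in this setting (cf.\ Remark~\ref{rmk-Smo-Local} and Proposition~\ref{Pro-grdim}(2)), so you are assuming what you want to prove. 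For a concrete obstruction, take $B=k$ and $A=k[x]/(x^2)$ with $\operatorname{char} k \neq 2$: then $A$ is $B$-flat but $\Omega_{A|k} \cong k$ is not $A$-flat, so the hypotheses of Lemma~\ref{Lem-Grd-Prod} are unavailable.

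Even setting that aside, your ``graded Auslander--Buchsbaum--Serre'' step is not the right invocation: $A_\mathfrak m$ is the degree-zero piece of $\gr(A^e_\mathfrak n)$, not its residue field, so finite projective dimension of $A_\mathfrak m$ does not directly force regularity of $\gr(A^e_\mathfrak n)$ without further argument. And the exactness of the associated graded complex, which you correctly flag as the ``main obstacle,'' requires strictness of the filtered maps; this is automatic in the smooth case precisely because the graded pieces are flat, but is not available to you here. The paper sidesteps all of this by outsourcing the hard part to \cite{Rodi}.
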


\begin{proof}
Consider the same $(A^e,A)$-projective resolution of $A$, as an $A^e$-module, as in the proof of Theorem \ref{Teo-Smo-Ida} from the natural exact sequence given by the multiplication morphism. 
Using the associativity of the tensor product, there is an isomorphism
$$\Tor_i^{A^{e}}(A^{e} \otimes_A M,-)\cong\Tor_i^A(M,-), \quad i\geq 0, $$
for any $A$-module $M$. Now, the additivity of $\Tor$ implies that every $(A^e,A)$-projective module is $A$-flat, where the left $A$-module structure is induced by the map $a \mapsto a \otimes 1$. Consequently, $J$ and $K_i$ are also $A$-flat, being kernels of epimorphisms between flat modules. Since $\cdim(A,B)$ is finite, the module $K_n$ is $(A^{e},A)$-projective for some $n$, implying that $\fd_{A^{e}}A$ is also finite (in which $\fd_R M$ denotes the flat dimension of an $R$-module $M$).
As was shown in \cite{Rodi} if $\fd_{A^{e}} A < \infty$ then $B \rightarrow A$ has geometrically regular fibers. But, for $B$-algebras essentially of finite type, this is equivalent to being smooth by \cite[\href{https://stacks.math.columbia.edu/tag/038X}{Tag 038X}]{stacks-project}.
\end{proof}
Jointly with Theorem \ref{Teo-Smo-Ida} and Remark \ref{remSmoothCdim} we get the following
\begin{corollary}\label{Cor-EqCdim}   
    Let $A$ and $B$ be algebras as in Theorem \ref{Thm-Cd-Smo}. Then $A$ is smooth if and only if $\cdim(A,B)$ is finite.
\end{corollary}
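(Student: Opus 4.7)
The plan is to assemble the corollary directly from the two preceding theorems, with one small bookkeeping observation. The direction "$\cdim(A,B)<\infty \Rightarrow A$ smooth" is nothing other than Theorem \ref{Thm-Cd-Smo}, so no new work is required for it, and I would simply cite that result.

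For the converse direction "$A$ smooth $\Rightarrow \cdim(A,B)<\infty$", I would look back at the proof of Theorem \ref{Teo-Smo-Ida}. That proof does not merely produce a bound on $\gd(A,B)$; it constructs an honest $(A^{e},A)$-projective resolution of $A$ of finite length, i.e. it shows that $\pd_{(A^{e},A)} A$ is finite. I would then use the identification from the remark preceding inequality \eqref{Lem-Hos-Upb}: since $B\otimes_B A = A$ as rings, we have
\[
    \cdim(A,B) \;=\; \pd_{(A\otimes_B A,\, B\otimes_B A)} A \;=\; \pd_{(A^{e},A)} A.
\]
Combining this identity with the finiteness of $\pd_{(A^{e},A)} A$ extracted from the proof of Theorem \ref{Teo-Smo-Ida} yields $\cdim(A,B) < \infty$, completing the equivalence.

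The only subtlety — and the "obstacle", though a mild one — is keeping the various relative invariants straight: the statement of Theorem \ref{Teo-Smo-Ida} asserts the finiteness of $\gd(A,B)$, whereas the content of its proof is the stronger statement that $\pd_{(A^{e},A)} A$ is finite, and it is this stronger statement (read through the identification $\cdim(A,B) = \pd_{(A^{e},A)} A$) that feeds into the corollary. Once this is noted, the proof consists of little more than invoking Theorems \ref{Teo-Smo-Ida} and \ref{Thm-Cd-Smo} in sequence.
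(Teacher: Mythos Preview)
Your proposal is correct and matches the paper's approach exactly: the paper simply says ``Jointly with Theorem \ref{Teo-Smo-Ida} we get the following'', and your observation that one must read off the finiteness of $\pd_{(A^{e},A)} A = \cdim(A,B)$ from the \emph{proof} of Theorem \ref{Teo-Smo-Ida} (rather than from its statement, which only mentions $\gd(A,B)$) is precisely the bookkeeping that the paper leaves implicit.
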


Arguing in the same fashion as in the proofs of Theorems \ref{Teo-Smo-Ida} and \ref{Thm-Cd-Smo}, we can obtain another interesting consequence of the standard $(A^e,A)$-projective resolution of the $A^e$-module $A$.

\begin{corollary}\label{Cor-ReltAbs}     
     If $A$ is a flat $B$-algebra and $M$ is an $A^e$-module such that $\Tor_i^{(A^e,A)}(A,M)=0$ whenever $i\geq k$, for some integer $k$, then $\Tor_i^{A^e}(A,M)=0$ for all $i\geq k$. 
\end{corollary}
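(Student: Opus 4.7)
The plan is to prove the stronger statement that $\Tor_n^{(A^e,A)}(A,M) \cong \Tor_n^{A^e}(A,M)$ for every $n$ and every $A^e$-module $M$, from which the corollary is immediate. The key observation is that, under the flatness hypothesis on $B \to A$, the standard $(A^e,A)$-projective resolution of $A$ constructed in the proof of Theorem~\ref{Teo-Smo-Ida} is simultaneously an $A^e$-flat resolution of $A$.

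Let $P_\bullet \to A$ be the standard resolution from Remark~\ref{SplicingKernels}, with $P_i = A^e \otimes_A K_i$, $K_0 = A$ and $K_{i+1} = \ker(P_i \to K_i)$. The core step is an induction showing that each $K_i$ is $A$-flat. The base case $K_0 = A$ is trivial. For the inductive step, the $A$-flatness of $K_i$ promotes to $B$-flatness of $K_i$ by transitivity of flatness (using that $A$ is $B$-flat). Identifying $A^e \otimes_A K_i \cong A \otimes_B K_i$ as left $A$-modules, one concludes that $P_i$ is $A$-flat. The $A$-split short exact sequence $0 \to K_{i+1} \to P_i \to K_i \to 0$ (built into the definition of $(A^e,A)$-exactness) then exhibits $K_{i+1}$ as an $A$-summand of $P_i$, hence $A$-flat.

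Once each $K_i$ is known to be $A$-flat, the adjunction isomorphism $\Tor_n^{A^e}(A^e \otimes_A K_i, -) \cong \Tor_n^A(K_i, -)$ recalled in the proof of Theorem~\ref{Thm-Cd-Smo} immediately gives that each $P_i$ is $A^e$-flat. Combined with the $A^e$-exactness of $P_\bullet \to A$, this identifies $P_\bullet$ as an $A^e$-flat resolution of $A$. Therefore both $\Tor_n^{A^e}(A,M)$ and $\Tor_n^{(A^e,A)}(A,M)$ can be computed as $H_n(P_\bullet \otimes_{A^e} M)$, and the claimed equality follows.

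I do not foresee a serious obstacle; the argument is essentially bookkeeping. The main subtlety is keeping track of which $A$-module structure on $A^e$ is being used at each step, and combining the transitivity of flatness with the adjunction at precisely the right moment so as to propagate the $A$-flatness of the successive kernels $K_i$ into the $A^e$-flatness of the resolution terms $P_i$.
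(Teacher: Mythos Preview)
Your proposal is correct and follows essentially the same route the paper indicates by pointing back to the proofs of Theorems~\ref{Teo-Smo-Ida} and~\ref{Thm-Cd-Smo}: use the standard $(A^e,A)$-projective resolution of $A$, observe via the adjunction $\Tor_i^{A^e}(A^e\otimes_A K,-)\cong\Tor_i^A(K,-)$ (valid because $A^e$ is $A$-flat when $A$ is $B$-flat) that each term is $A^e$-flat, and conclude that the same complex computes both relative and absolute $\Tor$. Your formulation is in fact slightly sharper than the corollary as stated, since you establish the degreewise isomorphism rather than just the implication on vanishing; the only cosmetic slip is the indexing $K_0=A$, which in the paper's convention (Remark~\ref{SplicingKernels}) is $K_1=A$.
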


It is natural to inquire whether the finiteness of $\gd(A,B)$ implies that the $B$-algebra $A$ is smooth. Notably, there are well-known counterexamples in the case where $B=k$ and $k$ is a non-perfect field. Indeed, let $k=\mathbb{F}_p(t)$ be a transcendental extension of the finite field $\mathbb{F}_p$,  the $k$-algebra $A=k[x]/(x^p-t)$ has global dimension zero (being a field) but it is not a smooth $k$-algebra (because $A\otimes_k A$ is a local ring with nilpotent element $(x\otimes1-1\otimes x)$, hence has infinite global dimension). This underscores the need for additional conditions on $B$ to ensure the validity of the claim. Below we provide some sufficient conditions on $B$ under which the question holds true.

\begin{theorem}\label{Teo-Volta-Smot}
   Let $k$ be a perfect field, and let $B$ be a finitely generated $k$-algebra. Suppose that $A$ is a flat $B$-algebra, essentially of finite type. If $\gd(A,B)$ is finite, then $A$ is a smooth $B$-algebra. 
\end{theorem}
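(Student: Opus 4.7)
The plan is to pass to fibers, use Auslander--Buchsbaum--Serre to turn finite global dimension into regularity, and then combine perfectness of $k$ with a Jacobson-type topological argument to upgrade regularity of the fibers to smoothness over their residue fields. Once every fiber of $B\to A$ is smooth over its residue field, the criterion ``flat $+$ geometrically regular fibers $\Leftrightarrow$ smooth'' (\cite[\href{https://stacks.math.columbia.edu/tag/038X}{Tag 038X}]{stacks-project}, as already used in the proof of Theorem~\ref{Thm-Cd-Smo}) concludes the argument.

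First, I would show that $\gd(A\otimes_B k(\mathfrak{q}))\leq\gd(A,B)$ for every $\mathfrak{q}\in\spec B$. Given an $(A\otimes_B k(\mathfrak{q}))$-module $N$, view it as an $A$-module and take its standard $(A,B)$-projective resolution $P_\bullet\to N$, which has length at most $\gd(A,B)$ and is $B$-split at each stage. Applying $-\otimes_B k(\mathfrak{q})$ preserves the splittings; the identity $k(\mathfrak{q})\otimes_B k(\mathfrak{q})=k(\mathfrak{q})$ yields $N\otimes_B k(\mathfrak{q})\cong N$; and each $(A,B)$-projective summand of some $A\otimes_B V_i$ maps to a summand of $(A\otimes_B k(\mathfrak{q}))\otimes_{k(\mathfrak{q})}(V_i\otimes_B k(\mathfrak{q}))$, which is free over $A\otimes_B k(\mathfrak{q})$ since $k(\mathfrak{q})$ is a field. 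Hence the base-changed complex is a finite projective resolution of $N$, and by Auslander--Buchsbaum--Serre each fiber is a regular Noetherian ring.

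Next I would handle the fibers over the maximal points of $B$. Since $B$ is finitely generated over $k$, the Nullstellensatz forces $[k(\mathfrak{m}):k]<\infty$ for every $\mathfrak{m}\in\Max B$, and as $k$ is perfect so is $k(\mathfrak{m})$. The fiber $A\otimes_B k(\mathfrak{m})$ is then regular Noetherian and essentially of finite type over a perfect field, hence geometrically regular, i.e.\ smooth over $k(\mathfrak{m})$. To extend this to every fiber I would reduce to the case of $A$ finitely generated over $B$ (writing $A=S^{-1}A'$ and using that smoothness is local on $\spec A$) and invoke the openness of the locus $U\subseteq\spec B$ on which the fiber is smooth over its residue field. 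The preceding argument gives $\Max B\subseteq U$; since $B$ is Jacobson, any open subset of $\spec B$ containing $\Max B$ equals all of $\spec B$, whence $U=\spec B$.

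The main obstacle I anticipate is this last topological extension: carefully carrying out the reduction from ``essentially of finite type'' to ``finitely generated'' and invoking the correct openness statement for the smooth-fiber locus together with the Jacobson property of $B$. The perfectness of $k$ enters only through the Nullstellensatz step, ensuring the residue fields at closed points of $\spec B$ are themselves perfect; this is precisely the feature that rules out the counterexample preceding the theorem, where $k(\mathfrak{q})=\mathbb{F}_p(t)$ fails to be perfect.
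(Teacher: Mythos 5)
Your proposal is correct and follows essentially the same route as the paper: bound the global dimension of every fiber by $\gd(A,B)$ using the ($B$-split) standard relative resolution, invoke the Nullstellensatz to see that the residue fields at closed points of $\spec B$ are perfect so that regularity of those fibers upgrades to geometric regularity, and then reduce smoothness of $B\to A$ to the closed points of $\spec B$. The one caveat is in the last step: the locus of $\mathfrak{q}\in\spec B$ over which the fiber is smooth is in general only constructible (the complement of the image of the closed non-smooth locus in $\spec A$), not open; but a nonempty constructible subset of a Jacobson scheme contains a closed point, so your Jacobson argument goes through with ``constructible'' in place of ``open'' --- this is precisely the reduction the paper outsources to \cite[\href{https://stacks.math.columbia.edu/tag/02G1}{Tag 02G1}]{stacks-project} and \cite[Proposition 6.15]{Gor}.
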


\begin{proof}

Using the fiberwise criterion of smoothness and the fact that smoothness is local on the target (meaning that it is sufficient to check it for closed points, see \cite[\href{https://stacks.math.columbia.edu/tag/02G1}{Tag 02G1}]{stacks-project} and \cite[Proposition 6.15]{Gor}), we prove that:
\begin{itemize}
\item[$(1)$] $\gd(A\otimes_B k(\mathfrak{m}))$ is finite,
\item[$(2)$] $k(\mathfrak{m})$ is a perfect field,
\end{itemize}
for every maximal ideal $\mathfrak{m}$ in $B$.
Item $(2)$ is a direct consequence of the general Nullstellensatz theorem as stated in \cite[Theorem 5.6.7]{Found-Comm}, and the fact that algebraic extensions of a perfect field are also perfect. To prove $(1)$, we consider an $A\otimes k(\mathfrak{m})$-module $M$ treated as an $A$-module, and take the standard $(A,B)$-projective resolution of $M$:
    
    \centerline{
    \xymatrix{
												&&&&K_1 \ar@{^{(}->}[rd]		\\		
	...	\ar[rr] \ar[rd]		&&A\otimes_B K_{n-1}  \ar[r]	 &...	\ar[rr]  \ar@{.>}[ru]	 &&A\otimes_B K_0 \ar@{->>}[rd] \ar[rr] 	&&A \otimes_B M \ar@{->>}[r]	&M, \\
		        &K_n  \ar@{^{(}->}[ru]								&&&&&K_0  \ar@{^{(}->}[ru]
    }}

As $\gd(A,B)$ is finite, then $K_n$ is $(A,B)$-projective for some $n$. Note that $A\otimes_B M$, $K_i$, and $A\otimes_B K_i$ all have structures as $A\otimes_B k(\mathfrak{m})$-modules. Furthermore, since all of these $A\otimes_B k(\mathfrak{m})$-modules can be viewed as $k(\mathfrak{m})$-vector spaces, we conclude that $A\otimes_B M$ and $A\otimes_B K_i$ are $A\otimes_B k(\mathfrak{m})$-free modules. The same argument shows that $K_n$ is an $A\otimes k(\mathfrak{m})$-projective module. This proves that $\gd(A\otimes_B k(\mathfrak{m}))$ is finite.

To complete the proof, we utilize the fact that for perfect fields $k(\mathfrak{m})$, being geometrically regular over $k(\mathfrak{m})$ is equivalent to being regular over $k(\mathfrak{m})$ by \cite[Chp X, Section 6.4]{Bour}. Additionally, being geometrically regular over a field is equivalent to the smoothness of the fiber over $k(m)$, as per \cite[\href{https://stacks.math.columbia.edu/tag/038X}{Tag 038X}]{stacks-project}.
\end{proof}

Finally, the previous theorem, together with Theorem \ref{Teo-Smo-Ida}, yield the following result.
\begin{corollary}\label{CorCriteriaPerfect}
Let $k$ be a perfect field, $B$ a finitely generated $k$-algebra, and $A$ a flat $B$-algebra,  essentially of finite type. Then $A$ is smooth if and only if $\gd(A,B)$ is finite.
\end{corollary}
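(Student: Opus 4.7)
The plan is to simply combine the two directions that have already been established in the preceding two subsections, namely Theorem~\ref{Teo-Smo-Ida} and Theorem~\ref{Teo-Volta-Smot}, and verify that the hypotheses of the corollary are strong enough to feed into both.

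For the forward implication, I would argue as follows. Assuming $A$ is smooth, I want to apply Theorem~\ref{Teo-Smo-Ida}, which requires $A$ to be a smooth noetherian $B$-algebra. Since $B$ is a finitely generated algebra over the field $k$, it is noetherian by Hilbert's basis theorem; and since $A$ is essentially of finite type over $B$, $A$ is obtained from a finitely generated $B$-algebra by a localization, hence is also noetherian. Thus Theorem~\ref{Teo-Smo-Ida} applies and yields $\gd(A,B) < \infty$.

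For the converse, assume $\gd(A,B) < \infty$. The hypotheses on $k$, $B$, and $A$ here are exactly those of Theorem~\ref{Teo-Volta-Smot}: $k$ is perfect, $B$ is a finitely generated $k$-algebra, and $A$ is a flat $B$-algebra essentially of finite type. That theorem then gives directly that $A$ is $B$-smooth, concluding the proof.

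There is essentially no obstacle here since all the real work has been done in Theorems~\ref{Teo-Smo-Ida} and~\ref{Teo-Volta-Smot}; the only minor point worth stating explicitly is the noetherian hypothesis needed for the forward direction, which as noted follows immediately from the finiteness assumptions on $B$ and $A$ over the field $k$. Hence the corollary is just the formal conjunction of the two main results of Sections~\ref{SecSmRel} and~\ref{SecCohom}.
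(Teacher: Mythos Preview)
Your proposal is correct and matches the paper's own argument exactly: the corollary is stated right after Theorem~\ref{Teo-Volta-Smot} as the conjunction of Theorems~\ref{Teo-Smo-Ida} and~\ref{Teo-Volta-Smot}, with no additional content. Your explicit check that $A$ is noetherian (needed to feed into Theorem~\ref{Teo-Smo-Ida}) is the only small addition, and it is a welcome clarification rather than a deviation.
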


\begin{corollary}\label{cor-Fibers-2}
    Let $A$, $B$ be algebras as in Theorem \ref{Teo-Volta-Smot}. Then, for every prime ideal $\mathfrak{q} \subset B$,     \begin{equation}\label{eq_relativeGl_fibers_inf}        
        \gd(A,B) \geq \gd(A \otimes_B k(\mathfrak{q})).
    \end{equation}
    Furthermore, jointly with Corollary \ref{cor-Fibers-1} we get that 
    \begin{align}
         \gd(A,B) &= \sup_{\mathfrak{q} \in \spec B} \gd(A \otimes_B k(\mathfrak{q})) \label{eq_relativeGl_fibers}\\
         &= \sup_{\mathfrak{m} \in \Max B} \gd(A \otimes_B k(\mathfrak{m})) \nonumber.
    \end{align} 
\end{corollary}

\begin{proof}
Indeed, the first claim follows directly from the proof of item~(1) in Theorem~\ref{Teo-Volta-Smot}.  
If the extension $B \subseteq A$ is smooth, the second claim follows from the first one together with Corollary~\ref{cor-Fibers-1}.  
In the non-smooth case, both sides of~\eqref{eq_relativeGl_fibers} are infinite, which completes the proof.
\end{proof}

\begin{rmk} Observe that inequality \eqref{eq_relativeGl_fibers_inf} remains valid even without the assumption that $A$ is $B$-flat, since this hypothesis is not required in the proof of item (1) of Theorem \ref{Teo-Volta-Smot}.
\end{rmk}
 
This corollary shows that the relative global dimension of an extension $B \subseteq A$ measures the relative dimension (in the smooth case) of this extension. Moreover, it allows us to establish a (relative) analogue of \cite[Theorem~16]{Aus55}. Namely, we have the following result.

\begin{proposition}\label{prop-Gldim-tensor}
Let $B \subseteq A$ and $D \subseteq C$ be two extensions satisfying the conditions of Theorem~\ref{Teo-Volta-Smot}. Then
\[
    \gd(A \otimes_k C, B \otimes_k D)
    = \gd(A, B) + \gd(C, D).
\]
\end{proposition}

\begin{proof}
Since both extensions $B \subseteq A$ and $D \subseteq C$ satisfy the hypotheses of Theorem~\ref{Teo-Volta-Smot}, the tensor product extension
\[
    B \otimes_k D \;\subseteq\; A \otimes_k C
\]
also satisfies these hypotheses, as flatness and being essentially of finite type are preserved under both base change and composition.

By Corollary~\ref{cor-Fibers-2}, we have
\[
    \gd(A \otimes_k C, B \otimes_k D)
    = \sup_{\mathfrak{p} \in \Max B \otimes_k D}
      \gd\big( (A \otimes_k C) \otimes_{B \otimes_k D} k(\mathfrak{p}) \big).
\]

We can use the description in \cite[\href{https://stacks.math.columbia.edu/tag/01JT}{Tag 01JT}]{stacks-project} of points on the fiber product of schemes to describe the maximal ideals of $B \otimes_k D$. These correspond to triples $(\mathfrak{q}, \mathfrak{r}, \mathfrak{s})$, where $\mathfrak{q}$ is a maximal ideal of $B$, $\mathfrak{r}$ is a maximal ideal of $D$, and $\mathfrak{s}$ is a maximal ideal of $k(\mathfrak{q}) \otimes_k k(\mathfrak{r})$. Since $k$ is perfect, both $k(\mathfrak{q})$ and $k(\mathfrak{r})$ are finite separable extensions of $k$, and their tensor product can be decomposed uniquely, up to isomorphism, as a product of finite separable extensions $L_i$ of $k$. The fields $L_i$ are precisely the residue fields of the maximal ideals of $B \otimes_k D$ corresponding to the pair $(\mathfrak{q}, \mathfrak{r}) \in \Max B \times \Max D$.

Moreover, such decomposition implies the isomorphisms:
\begin{align}
    (A\otimes_B k(\mathfrak{q}))\otimes_k (C\otimes_D k(\mathfrak{r}))
    &\simeq (A\otimes_k C) \otimes_{B\otimes_k D} (k(\mathfrak{q})\otimes_k k(\mathfrak{r})) \label{eq-tensor}\\ 
    &\simeq \bigtimes_{i=1}^l (A\otimes_k C)\otimes_{B\otimes_k D} L_i \nonumber
\end{align}

To conclude the argument, we use the fact that, for noetherian rings, the global dimension can be verified locally, and that for local noetherian algebras over a perfect field, the global dimension coincides with the projective dimension as a bimodule. Consequently, the dimension considered in \cite[Chp.~IX, Section~7]{C-Eil} agrees with the global dimension, allowing us to apply \cite[Chp.~IX, Propositions~7.3 and~7.4]{C-Eil} to~\eqref{eq-tensor}.

Hence, the global dimension of the fibers of the extension $B \otimes_k D \subseteq A \otimes_k C$ is bounded by the sum of the global dimensions of the fibers of $B \subseteq A$ and $D \subseteq C$. Moreover, equality is attained for a suitable choice of $L_i$. Taking the supremum over all maximal ideals of $B \otimes_k D$ then yields the desired equality.

\end{proof}

\section{Examples and final remarks} \label{SecExamp}

\subsection{Examples}

\begin{example}
    Let $X$ and $Y$ be algebraic varieties, and let 
    $\pi_Y : X \times Y \to Y$ be the canonical projection. 
    In this case, the corresponding inclusion map 
    $$\pi_Y^* : k[Y] \to k[X \times Y] = k[X] \otimes_k k[Y]$$ 
    is a smooth morphism if and only if $X$ is a smooth variety. Therefore, by Theorem~\ref{Teo-Smo-Ida},
    $\gd(k[X] \otimes_k k[Y], k[Y]) < \infty$ if and only if $\gd(k[X])<\infty$.
    Moreover, using Corollary \ref{cor-Fibers-2} one gets:
    \[
        \gd(k[X] \otimes_k k[Y], k[Y]) = \gd k[X] .
    \]
\end{example}

\begin{example} \label{Ex-etale}
    Let $A$ be a smooth noetherian $B$-algebra such that $\Omega_{A|B} = 0$. In this case, the extension $B \subseteq A$ is called \emph{\'{e}tale}. Then $\gr(A^e) = A$, and Theorem \ref{Teo-Smo-Ida} implies that $\gd(A,B) = 0$. Moreover, for each prime ideal $\mathfrak{p} \in \spec(B)$, we can compute the global dimension of $A \otimes_B k(\mathfrak{p})$, which is bounded above by $\gd(A,B) = 0$, hence $\gd(A \otimes_B k(\mathfrak{p})) = 0$. A simple example of an \'{e}tale extension is given by any localization $S^{-1}A$ for a multiplicative subset $S$ of a domain $A$.
\end{example}

The following construction provides an example of a smooth noetherian extension $B \subseteq A$ where the $A$-module $A \otimes_B A$ fails to be projective.

\begin{example}\label{Ex-Aenonproj}    
  Let $B = k[t]$ and $A = k[t,t^{-1}] \times k[x] = A_1 \times A_2$, where $A$ is endowed with the diagonal $B$-algebra structure. Then this extension is smooth, since both $A_1$ and $A_2$ are smooth $B$-algebras (the latter trivially, and the former by Example~\ref{Ex-etale}), and smoothness is local on the source and the target hence preserved under finite products.  Moreover, we can identify  the $A$-module $A \otimes_B A$ as the direct sum
    \[
    A \otimes_B A = (k[t,t^{-1}] \oplus k[t,t^{-1}]) \oplus (k[t,t^{-1}] \oplus k[t]).
    \]
       Via the decomposition $A = A_1 \times A_2$, this induces $A_i$-module structures  
    \[
M_1 := k[t,t^{-1}] \oplus k[t,t^{-1}] \quad \text{over } A_1, 
\qquad
M_2 := k[t,t^{-1}] \oplus k[t] \quad \text{over } A_2,
    \] 
    with $A_2M_1=0$ and $A_1M_2=0$. It follows that $A \otimes_B A$ is projective as an $A$-module if and only if $M_1$ is projective over $A_1$ and $M_2$ is projective over $A_2$. Since $M_2$ is not projective as an $A_2$-module, we conclude that $A \otimes_B A$ is not projective as an $A$-module.
\end{example}

Using Corollary~\ref{CorCriteriaPerfect}, one can easily construct 
examples of extensions with infinite relative global dimension, 
even when both $A$ and $B$ are smooth $k$-algebras.

\begin{example}
Let $A = k[x_1, \dots, x_n]$ and let $G \subset \mathrm{GL}_n(k)$ be a finite group generated by pseudoreflections acting linearly on $A$.  Although $A^G$ is a polynomial ring (by the Chevalley--Shephard--Todd theorem), 
the inclusion $A^G \hookrightarrow A$ is \emph{not smooth} (unless the action of $G$ is trivial), because each pseudoreflection 
fixes a hyperplane, causing ramification along these codimension-one loci where 
the module of differentials $\Omega_{A|A^G}$ fails to be locally free. Consequently, 
\[
       \gd(A, A^G) = \infty.
\]
For instance, in the one-dimensional case $A = k[x]$ and $G = \{1, -1\}$, we get
\[
       \gd(k[x], k[x^2]) =\infty.
\]
\end{example}

\subsection{Smoothness and (relative) homology}

The Hochschild homology of a $k$-algebra $A$ is defined as
\[
    \HH_*(A) = \Tor_*^{A \otimes_k A}(A, A).
\]
By the Hochschild–Kostant–Rosenberg theorem (\cite[Theorem~5.2]{HKR-62}), the Hochschild homology of a smooth $k$-algebra vanishes in sufficiently high degrees.  
The converse was established independently in \cite{Av-VP} and \cite{BACH}, the latter under the additional assumption that $\mathrm{char}\, k = 0$.  
An analogous question arises in the noncommutative setting, where it is known as Han’s conjecture; see \cite{CLMS, CLMS21, IM21} and the references therein for recent progress in this direction.

It is natural to ask a similar question in the relative setting.  
Given an extension of commutative algebras $B \subseteq A$ and an $A$-bimodule $M$, Hochschild~\cite{Hoc} introduced the \emph{relative Hochschild homology} of the extension $B \subseteq A$, defined in terms of the corresponding relative bar complex:
\[
    \HH_*(A|B) = \Tor_*^{(A \otimes_k A, B \otimes_k B )}(A, A).
\]

We prove 

\begin{theorem}
    Let $k$ be a perfect field, $B$ a finitely generated $k$-algebra and $A$ a flat $B$-algebra, essentially of finite type. Then the following are equivalent:
    \begin{enumerate}
        \item[(i)] $B\subseteq A$ is smooth;
        \item[(ii)] $\gd(A,B)<\infty$;
        \item[(iii)] $\HH_j(A|B)=0$, for $j$ sufficiently large.        
    \end{enumerate}
\end{theorem}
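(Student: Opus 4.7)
Since Corollary~\ref{CorCriteriaPerfect} already gives (i)\,$\Leftrightarrow$\,(ii), it suffices to fit (iii) into the equivalence; the plan is to close the cycle (ii)\,$\Rightarrow$\,(iii)\,$\Rightarrow$\,(i).

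The implication (ii)\,$\Rightarrow$\,(iii) is a one-line application of Proposition~\ref{prop-Gldim-tensor}: applying it to two copies of the extension $B\subseteq A$ (both of which satisfy the hypotheses of Theorem~\ref{Teo-Volta-Smot}) yields $\gd(A\otimes_k A,\,B\otimes_k B)=2\gd(A,B)<\infty$. Since $\HH_j(A|B)=\Tor^{(A\otimes_k A,\,B\otimes_k B)}_j(A,A)$ by definition, and relative $\Tor$ vanishes in degrees strictly above the relative global dimension, we obtain $\HH_j(A|B)=0$ for all $j>2\gd(A,B)$.

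For (iii)\,$\Rightarrow$\,(i) I plan a fiberwise reduction. Flatness of $A$ over $B$ combined with the fiberwise criterion of smoothness (as used in Theorem~\ref{Teo-Volta-Smot}) reduces the claim to showing that each fiber $A\otimes_B k(\mathfrak{m})$, for $\mathfrak{m}\in\Max B$, is smooth over $k(\mathfrak{m})$. Since $k$ is perfect and $k(\mathfrak{m})/k$ is a finite algebraic extension, $k(\mathfrak{m})$ is itself perfect, and the converse to the Hochschild--Kostant--Rosenberg theorem due to Avramov--Vigu\'e-Poirrier~\cite{Av-VP} reduces matters further to proving that $\HH_j(A\otimes_B k(\mathfrak{m}))$ vanishes for $j\gg 0$. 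To transport hypothesis (iii) to the fiber, I identify $\HH_*(A|B)\cong \Tor^{A\otimes_B A}_*(A,A)$ (available by $B$-flatness), represent it by the termwise $B$-flat bar complex $A^{\otimes_B(\bullet+2)}$, and base-change along $B\twoheadrightarrow k(\mathfrak{m})$ to obtain a K\"unneth-type spectral sequence
$$E^2_{p,q}=\Tor^B_p(\HH_q(A|B),\,k(\mathfrak{m}))\;\Longrightarrow\;\HH_{p+q}(A\otimes_B k(\mathfrak{m})).$$

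The main obstacle lies precisely in this transfer: because $B$ need not be regular at $\mathfrak{m}$, the terms $\Tor^B_p(\HH_q(A|B),k(\mathfrak{m}))$ can a priori be nonzero for $p$ arbitrarily large, so the finite vanishing of the groups $\HH_q(A|B)$ does not automatically force the abutment to vanish in high total degree. To neutralize this, I plan to exploit that $A$ is essentially of finite type, whence each $\HH_q(A|B)$ is a finitely generated $A$-module of controlled support over $B$; after localizing at $\mathfrak{m}$, one can then hope to argue in the spirit of the Semi-Rigidity Theorem of~\cite{Av-Iyen}. A cleaner alternative, if it proves accessible, would be to show directly that (iii) forces $\cdim(A,B)<\infty$, at which point Theorem~\ref{Thm-Cd-Smo} concludes immediately.
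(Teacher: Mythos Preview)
Your argument for (ii)\,$\Rightarrow$\,(iii) via Proposition~\ref{prop-Gldim-tensor} matches the paper exactly.

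For (iii)\,$\Rightarrow$\,(i), however, your fiberwise approach is genuinely incomplete, and you correctly diagnose the obstacle yourself: without regularity of $B$ the spectral sequence $E^2_{p,q}=\Tor^B_p(\HH_q(A|B),k(\mathfrak m))$ need not vanish for large $p$, so vanishing of $\HH_q(A|B)$ for $q\gg 0$ does not by itself force vanishing of the abutment in high total degree. Your suggestion to ``hope to argue in the spirit of'' Semi-Rigidity after localizing does not supply a concrete mechanism, and the alternative of proving $\cdim(A,B)<\infty$ directly from (iii) is stated without an idea of how to do it.

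The paper's route avoids the fibers altogether and is very short. The key observation is that Hochschild's relative bar complex is simultaneously an $(A\otimes_k A,\,B\otimes_k B)$-projective resolution and an $(A\otimes_B A,\,A)$-projective resolution of $A$, and that tensoring it with $A$ over $A\otimes_k A$ or over $A\otimes_B A$ yields the same complex. Hence
\[
\HH_j(A|B)=\Tor^{(A\otimes_k A,\,B\otimes_k B)}_j(A,A)\;\cong\;\Tor^{(A\otimes_B A,\,A)}_j(A,A).
\]
Now Corollary~\ref{Cor-ReltAbs} (with $M=A$) promotes the relative vanishing to absolute vanishing: $\Tor^{A\otimes_B A}_j(A,A)=0$ for $j\gg 0$. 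At this point the Semi-Rigidity Theorem of~\cite{Av-Iyen} applies \emph{directly} to the extension $B\subseteq A$ and gives smoothness, with no passage to fibers required. This is in fact close to the ``cleaner alternative'' you gesture at, except that one does not need the full $\cdim(A,B)<\infty$: it suffices to pass from the relative $\Tor$ over $(A^e,A)$ to the absolute $\Tor$ over $A^e$, which is precisely what Corollary~\ref{Cor-ReltAbs} is there for.
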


\begin{proof}
The implication $[(i) \Rightarrow (ii)]$ follows from Theorem~\ref{Teo-Smo-Ida}.  

Assuming $(ii)$ and taking $C = A$ and $D = B$, by Proposition~\ref{prop-Gldim-tensor} we obtain
\[
    \gd(A \otimes_k A, B \otimes_k B) = 2\gd(A,B) < \infty,
\]
which proves $(iii)$.

To show $[(iii) \Rightarrow (i)]$, observe that
\[
    (A \otimes_B \cdots \otimes_B A) \otimes_{A \otimes_k A} A 
    \simeq 
    (A \otimes_B \cdots \otimes_B A) \otimes_{A \otimes_B A} A.
\]
Together with the fact that the relative bar resolution from~\cite[Section~3]{Hoc} is simultaneously an $(A \otimes_B A, A)$-projective resolution and an $(A \otimes_k A, B \otimes_k B)$-projective resolution, this yields
\[
    \Tor_j^{(A \otimes_k A, B \otimes_k B)}(A, A)
    \simeq 
    \Tor_j^{(A \otimes_B A, A)}(A, A).
\]
Hence, applying Corollary~\ref{Cor-ReltAbs}, we deduce from condition~$(i)$ that $\Tor_j^{A^e}(A, A) = 0$ for all sufficiently large~$j$.  
Finally, by the Semi-Rigidity Theorem of~\cite{Av-Iyen}, this vanishing implies the smoothness of the extension $B \subseteq A$.
\end{proof}

\end{document}